\newcommand{\RR}{\mathbb{R}}
\newcommand{\CC}{\mathbb{C}}
\newcommand{\NN}{\mathbb{N}}
\newcommand{\ZZ}{\mathbb{Z}}
\newcommand{\QQ}{\mathbb{Q}}
\newcommand{\OO}{\mathcal{O}}
\newcommand{\Ss}{\mathcal{S}}
\newtheorem{Th}{Theorem}
\newtheorem{Pro}{Proposition}
\newtheorem{Ex}{Example}
\theoremstyle{definition}
\newtheorem{Df}{Definition}
\theoremstyle{remark}
\newtheorem{Rem}{Remark}
\begin{document}
\keywords{Stokes phenomenon, hyperfunctions}
\subjclass[2010]{}
\title[The Stokes phenomenon]{The Stokes phenomenon for certain PDEs in a case when initial data have a
finite set of singular points}
\author{Bo\.{z}ena Tkacz}
\address{Faculty of Mathematics and Natural Sciences,
College of Science\\
Cardinal Stefan Wyszy\'nski University\\
W\'oycickiego 1/3,
01-938 Warszawa, Poland}
\email{bpodhajecka@o2.pl}

\begin{abstract}
We study the Stokes phenomenon via hyperfunctions for the solutions of the 1-dimensional complex heat equation
 under the condition that the Cauchy data are holomorphic on $\mathbb{C}$ but a finitely many singular or branching points with the appropriate growth condition at the infinity. The main tool are the theory of summability and the theory of hyperfunctions, which allows us to describe jumps across Stokes lines.

\end{abstract}

\maketitle

\section{Introduction}

This paper deals with the 1-dimensional complex heat equation  $\partial_t u(t,z)=\partial_z^2 u(t,z)$, $u(0,z)=\varphi(z)$. The aim of this work is to
describe jumps across the Stokes lines in terms of hyperfunctions in the case when the initial data $\varphi(z)$ have a finite set of singular points. 
First, we consider the function $\varphi(z)$ which has a single-valued singular point and we derive the jump in a form of convergent series (see Theorem \ref{tw:1}). Then we discuss the case when the function $\varphi(z)$ has a multi-valued singular point and we give the integral representation of the jump (see Theorem \ref{tw:3}). Thus we obtain a full characterization of the Stokes phenomenon for the considered equation.
 At the end, we extend our results to the generalization of the heat equation.\\
 The important point to note here is that  D.A.~Lutz, M.~Miyake and R.~Sch\"afke in \cite{L-M-S} considered the similar problem for the heat equation when the Cauchy data is a function $\varphi(z)=1/z$ with singularity at 0. They proved that the heat kernel was given by a function as a jump of Borel sum (see [ \cite{L-M-S}, Theorem 5.1]).\\
 It is worth pointing out that this work is a continuation of the paper \cite{Mic-Pod} in which we study the heat equation with the Cauchy data given by a meromorphic function with a simple pole or finitely many poles.

\section{Notation. Gevrey's asymptotics and $k$-summability}
In the paper we use the following notation.\\
A set of the form
\begin{enumerate}
\item \begin{displaymath}
S=S_d(\alpha,R)=\{z\in\tilde{\CC}\colon\ z=r\*e^{i\phi},\ r\in(0,R),\ \phi\in(d-\alpha/2,d+\alpha/2)\}.
\end{displaymath}
defines a sector $S$ in a direction $d\in\RR$ with an opening $\alpha>0$ and a radius $R\in\RR_+$ in the universal covering space $\tilde{\CC}$ of
$\CC\setminus\{0\}$,
\item \begin{displaymath}
 D_r=\{z\in\CC:|z|<r\}.
 \end{displaymath}
  defines a complex disc $D_r$ in $\CC$ with a radius $r>0$.
\end{enumerate}
In the case that\\
1. $R=+\infty$, then this sector is called unbounded and one can write $S=S_d(\alpha)$ for short,\\
2. the opening $\alpha$ is not essential, then the sector $S_d(\alpha)$ is denoted briefly by $S_d$,\\
3. the radius $r$ is not essential, the set $D_r$ will be designate by $D$.\\
 To simplify the notation, we abbreviate a set $S_d(\alpha)\cup D$
 (resp. $S_d\cup D$) to $\widehat{S}_d(\alpha)$ (resp. $\widehat{S}_d$).
 
If $f$ is a holomorphic function on a domain $G\subset\CC^n$, then it will be written as $f\in\OO(G)$.

The set of all formal power series (i.e. a power series $\sum_{n=0}^{\infty} a_{n}\*t^{n}$ created for a sequence of complex numbers $(a_n)_{n=0}^{\infty}$) will be represented by the symbol $\CC[[t]]$. Similarly, $\OO(D_r)[[t]]$  stands for the set of all formal power series 
 $\sum_{n=0}^{\infty} a_{n}(z)t^{n}$ with $a_{n}(z)\in\OO(D_r)$ for all $n\in\NN_{0}$.

\begin{Df}
Assume that $k>0$ and $f\in\OO(S)$. The function $f$ is called of \textit{exponential growth of order at most $k$}, if for every proper subsector $S^*\prec S$ (i.e. $\overline{S^{*}}\setminus\{0\} \subseteq S$) there exist constants $C_1, C_2>0$ such that
$|f(x)|\le C_1\*e^{C_2|x|^{k}}$ for every $x\in S^*$. \\
If the function $f$ is of exponential growth of order at most $k$, then one can write $f\in\OO^{k}(S)$.
\end{Df}

\begin{Df}
A power series $\sum_{n=0}^{\infty} a_{n}\*t^{n} \in\CC[[t]]$ is called a \emph{formal power series of Gevrey order $s$} ($s\in\RR$), if there exist positive constants $A, B>0$ such that $|a_n|\le A\*B^n\*(n!)^s$ for every $n\in\NN_0$. The set of all such formal power series is denoted by $\CC[[t]]_s$ (resp. $\OO(D_r)[[t]]_s$).
\end{Df}

\begin{Rem}
\label{prop:3}(see \cite {B2})
If $k<0$ then $u\in\CC[[t]]_k\Longleftrightarrow$ $u$ is convergent  and $u\in\OO^{-\frac{1}{k}}(\CC)$.
\end{Rem}

\begin{Df}
Assume that $s\in\RR$, $S$ is a given sector in $\tilde{\CC}$ and $f\in\OO(S)$. A power series
$\hat{f}(t)=\sum_{n=0}^{\infty} a_{n}\*t^{n}\in\CC[[t]]_{s}$
 is called \textit{Gevrey's asymptotic expansion of order $s$} of the function $f$ in $S$ (in symbols $f(t)\sim_s\hat{f}(t)$ in $S$) if  for every $S^*\prec S$ there exist positive constants $A, B>0$ such that for every $N\in\NN_0$ and every $t\in S^*$
 $$|f(t)-\sum_{n=0}^{N} a_{n}\*t^{n}|\le A\*B^N\*(N!)^{s}\*|t|^{N+1}.$$
\end{Df}

\label{sect}
To introduce the notion of summability, by Balser's theory of general moment summability (\cite[Section 6.5]{B2},
in particular \cite[Theorem 38]{B2}), we may take Ecalle's acceleration and deceleration operators instead of the standard Laplace and Borel
transform. 
\begin{Df}[see {\cite[Section 11.1]{B2}}]
Let $d\in\RR$, $\tilde{k}>\bar{k}>0$ and $k:=(1/\bar{k}-1/\tilde{k})^{-1}$.

The \emph{acceleration operator in a direction $d$ with indices $\tilde{k}$ and $\bar{k}$},
denoted by $\mathcal{A}_{\tilde{k},\bar{k},d}$, is defined for every $g(t)\in\OO^k(\widehat{S}_d)$ by
\begin{equation*}
 (\mathcal{A}_{\tilde{k},\bar{k},d}g)(t):=t^{-\bar{k}}\int_{e^{id}\RR_+}g(s)C_{\tilde{k}/\bar{k}}\big((s/t)^{\bar{k}}\big)\,ds^{\bar{k}},
\end{equation*}
where the \emph{Ecalle kernel $C_{\alpha}$} is defined by
\begin{gather}
\label{eq:ecalle_kernel}
 C_{\alpha}(\tau):=\sum_{n=0}^{\infty}\frac{(-\tau)^n}{n!\,\*\Gamma\bigl(1-\frac{n+1}{\alpha}\bigr)}\quad\text{for}\quad \alpha>1\,\,\, 
\end{gather}
and the integration is taken over the ray $e^{id}\RR_+:=\{re^{id}\colon r\geq 0\}$.

The \emph{formal deceleration operator with indices $\tilde{k}$ and $\bar{k}$}, denoted by $\hat{\mathcal{D}}_{\tilde{k},\bar{k}}$, is defined
for every $\hat{f}(t)=\sum_{n=0}^{\infty}a_{n}t^{n}\in\CC[[t]]$ by
\begin{equation*}
 (\hat{\mathcal{D}}_{\tilde{k},\bar{k}}\hat{f})(t):=\sum_{n=0}^{\infty}a_nt^n\frac{\Gamma(1+n/\tilde{k})}{\Gamma(1+n/\bar{k})}.
\end{equation*}
\end{Df}

\begin{Df}
Let $k>0$ and $d\in\RR$. A formal power series $\hat{f}(t)=\sum_{n=0}^{\infty} a_{n}t^{n}\in\CC[[t]]$ is
called $k$-\emph{summable in a direction} $d$ if 
\begin{gather*}
g(t)=(\hat{\mathcal{D}}_{1,\frac{k}{k+1}}\hat{f})(t)=
\sum_{n=0}^{\infty}a_n\frac{\Gamma(1+n)}{\Gamma(1+\frac{n(k+1)}{k})}t^n\in\OO^k(\widehat{S}_d(\varepsilon)) \quad \textrm{for some} \quad 
\varepsilon>0.
\end{gather*}

Moreover, the \emph{$k$-sum of $\hat{f}(t)$ in the direction $d$} is given by
\begin{equation}
\label{eq:ecalle}
f^d(t)=\Ss_{k,d}\hat{f}(t):=(\mathcal{A}_{1,\frac{k}{k+1},\theta}\hat{\mathcal{D}}_{1,\frac{k}{k+1}}\hat{f})(t)\quad\textrm{with}\quad
\theta\in (d-\varepsilon/2,d+\varepsilon/2).
\end{equation}
\end{Df}

\begin{Df}
If $\hat{f}$ is $k$-summable in all directions $d$ but (after identification modulo $2\pi$) finitely many directions $d_1,\dots,d_n$ then
$\hat{f}$ is called \emph{$k$-summable} and $d_1,\dots,d_n$ are called \emph{singular directions of $\hat{f}$}.
\end{Df}

\section{The Stokes phenomenon and hyperfunctions}

\subsection{The Stokes phenomenon for $k$-summable formal power series}
Now let us recall the concept of the Stokes phenomenon \cite[Definition 7]{Mic-Pod}.
\begin{Df}
\label{df:stokes}
Assume that $\hat{f}\in\CC[[t]]_{1/k}$ (resp. $\hat{u}\in\OO(D)[[t]]_{1/k}$) is $k$-summable with finitely many singular directions $d_1,d_2,\dots,d_n$. Then for every $l=1,\dots,n$ a set $\mathcal{L}_{d_{l}}=\{t\in\tilde{\CC}\colon \arg t=d_{l}\}$ 
is called a \emph{Stokes line for $\hat{f}$ (resp. $\hat{u}$)}. Of course every such Stokes line $\mathcal{L}_{d_l}$ for $\hat{f}$ (resp. $\hat{u}$) determines so called \emph{anti-Stokes lines
 $\mathcal{L}_{d_l\pm\frac{\pi}{2k}}$ for $\hat{f}$ (resp. $\hat{u}$)}.

Moreover, if $d_l^+$ (resp. $d_l^-$) denotes a direction close to $d_l$ and greater (resp. less) than $d_l$, and let $f^{d_l^+}=\mathcal{S}_{k, d_l^+}\hat{f}$ (resp. $f^{d_l^-}=\mathcal{S}_{k, d_l^-}\hat{f}$)
then the difference $f^{d_l^+}- f^{d_l^-} $ is called a \emph{jump} for $\hat{f}$ across the Stokes line $\mathcal{L}_{d_l}$. Analogously
we define the jump for $\hat{u}$.
\end{Df}

\begin{Rem}
Let  $r(t):=f^{d_l^+}(t)- f^{d_l^-}(t)$ for all $t\in S=S_{d_l}(\frac{\pi}{k})$. Then $r(t)\sim_{1/k} 0$ on $S$.
\end{Rem}

\subsection{Laplace type hyperfunctions}
We will describe jumps across the Stokes lines in terms of hyperfunctions. The similar approach to the Stokes phenomenon one can find
in \cite{Im, Mal3, S-S}. For more information about the theory of hyperfunctions we refer the reader to \cite{Kaneko}.

We will consider the space 
$$\mathcal{H}^k(\mathcal{L}_d):=\OO^k(D\cup (S_d\setminus \mathcal{L}_d))\Big/\OO^k(\widehat{S}_d)$$
of Laplace type hyperfunctions supported by $\mathcal{L}_d$ with exponential growth of order $k$. It means that every hyperfunction
$G\in\mathcal{H}^k(\mathcal{L}_d)$ may be written as
\[
G(s)=[g(s)]_{d}=\{g(s)+h(s)\colon h(s)\in\OO^{k}(\widehat{S}_d)\}
\]
for some defining function $g(s)\in\OO^k(D\cup (S_d\setminus \mathcal{L}_d))$. 

By the K\"othe type theorem \cite{Kot} one can treat the hyperfunction $G=[g(s)]_d$ as the analytic functional defined by
\begin{gather}
\label{eq:kothe}
G(s)[\varphi(s)]:=\int_{\gamma_d}g(s)\varphi(s)\,ds\quad\textrm{for sufficiently small}\quad \varphi\in\OO^{-k}(\widehat{S}_d)
\end{gather}
with $\gamma_{d}$ being a path consisting of the half-lines from 
$e^{id^-}\infty$ to $0$ and from $0$ to $e^{id^+}\infty$, i.e.
$\gamma_{d}=-\gamma_{d^-}+\gamma_{d^+}$ with $\gamma_{d^{\pm}}=\mathcal{L}_{d^{\pm}}$.

\subsection{The description of jumps across the Stokes lines in terms of hyperfunctions}
Assume that $\hat{f}$ is $k$-summable and $d$ is a singular direction. 
By (\ref{eq:ecalle}) the jump for $\hat{f}$ across the Stokes line $\mathcal{L}_d$ is given by 
\[
  f^{d^+}(t)-f^{d^-}(t)=(\mathcal{A}_{1,\frac{k}{k+1},d^+}-\mathcal{A}_{1,\frac{k}{k+1},d^-})\hat{\mathcal{D}}_{1,\frac{k}{k+1}}\hat{f}(t).
\]

We will describe this jump in terms of hyperfunctions.
To this end, observe that we can treat
$g(t):=\hat{\mathcal{D}}_{1,\frac{k}{k+1}}\hat{f}(t)\in\OO^{k}(D\cup (S_{d}\setminus \mathcal{L}_{d}))$
as a defining function of the hyperfunction $G(s):=[g(s)]_{d}\in \mathcal{H}^{k}(\mathcal{L}_{d})$.

So, for sufficiently small $r>0$ and $t\in S_{d}(\frac{\pi}{k}, r)$ this jump is given as the Ecalle acceleration operator
$\mathcal{A}_{1,\frac{k}{k+1},d}$ acting on the hyperfunction $G(s)$. Precisely, we have
\begin{gather*}
  f^{d^+}(t)-f^{d^-}(t)=(\mathcal{A}_{1,\frac{k}{k+1},d}G)(t):=
  G(s)\Big[t^{\frac{-k}{1+k}}C_{\frac{k+1}{k}}((s/t)^{\frac{k}{1+k}})\frac{k}{1+k}s^{-\frac{1}{1+k}}\Big]\\
  = G(s^{\frac{1+k}{k}})\Big[t^{-k/(1+k)}C_{\frac{k+1}{k}}(s/t^{\frac{k}{1+k}})\Big],
\end{gather*}
where 
$G(s)[\varphi(s)]$ is defined by (\ref{eq:kothe}), and the last equality holds by the change of variables, because
 if  $G(s)=[g(s)]_{d}$
then  $G(s^p)=[g(s^p)]_{d/p}$ for every $p>0$.

\section{Characterization of the Stokes phenomenon in a case when the initial data have a finite set of singular points}

In this section we specify a form of the jumps across the Stokes lines based on the solution of the heat equation in a case when the initial data have a finite set of singular points. Due to the linearity of the equation, it is enough to consider the case that the singularity occurs only at one point -- singular or branching point.

Recall the following proposition

\begin{Pro}[{\cite[Theorem 4]{Mic-Pod}}]
\label{th:heat}
Suppose that $\hat{u}$ is a unique formal solution of the Cauchy problem of the heat equation 
\begin{equation}
 \label{eq:heat}
 \begin{cases}
  \partial_t u=\partial_z^2 u\\
  u(0,z)=\varphi(z)
 \end{cases}
\end{equation}
 with
\begin{gather}
\label{eq:heat_cond}
\varphi\in\OO^2 \biggl(D\cup S_{\frac{d}{2}}(\frac{\varepsilon}{2})\cup S_{\frac{d}{2}+\pi}(\frac{\varepsilon}{2})\biggr)\quad
\textrm{for\,\, some}\quad \varepsilon>0.
\end{gather}
Then $\hat{u}$ is $1$-summable in the direction $d$ and for every $\theta\in(d-\frac{{\varepsilon}}{2},d+\frac{{\varepsilon}}{2})$ and
for every $\tilde{\varepsilon}\in(0,\varepsilon)$ there exists
$r>0$ such that  its
$1$-sum $u^{\theta}\in\OO(S_{\theta}(\pi-\tilde{\varepsilon},r)\times D)$ is represented by
\begin{equation}
 \label{eq:heat_solution}
u(t,z)=u^{\theta}(t,z)=
\frac{1}{\sqrt{4\*\pi\*t}}\*\int_{0}^{e^{i\*\frac{\theta}{2}}\*\infty}\,\,\bigl(\varphi(z+s)+\varphi(z-s)\bigr)\,\*e^{\frac{-s^2}{4t}}ds
\end{equation}
for $t\in S_{\theta}( \pi-\tilde{\varepsilon},r)$ and $z\in D_r$.
\end{Pro}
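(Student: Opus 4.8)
The plan is to compute the formal solution explicitly, pass it through the deceleration operator to obtain $1$-summability almost for free, and then recognise the resulting acceleration integral as the stated heat kernel.

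First I would determine $\hat u$. Inserting $\hat u(t,z)=\sum_{n=0}^\infty u_n(z)t^n$ into (\ref{eq:heat}) and matching powers of $t$ gives $u_0=\varphi$ and $(n+1)u_{n+1}=u_n''$, whence $u_n(z)=\varphi^{(2n)}(z)/n!$ and
\[
\hat u(t,z)=\sum_{n=0}^\infty\frac{\varphi^{(2n)}(z)}{n!}\,t^n.
\]
By the Cauchy estimates on a disc on which $\varphi$ is holomorphic, together with $(2n)!/n!\le C\,4^n\,n!$, one checks $|u_n(z)|\le A\,B^n\,n!$ for $z\in D$, so $\hat u\in\OO(D)[[t]]_{1}$ and $\hat u$ is a candidate for $1$-summability.

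The crux is the following observation. Applying the formal deceleration operator with indices $\tilde k=1$, $\bar k=\tfrac12$ and using $\Gamma(1+n)/\Gamma(1+2n)=n!/(2n)!$ gives
\[
g(t,z):=(\hat{\mathcal D}_{1,\frac12}\hat u)(t,z)=\sum_{n=0}^\infty\frac{\varphi^{(2n)}(z)}{(2n)!}\,t^n=\frac{\varphi(z+\sqrt t)+\varphi(z-\sqrt t)}{2},
\]
the last equality being the even part of the Taylor expansion of $\varphi$. As $t$ runs over $\widehat S_d(\varepsilon)$ the point $\sqrt t$ runs over direction $d/2$ with half-opening $\varepsilon/4$, so $z\pm\sqrt t$ stays in the sectors $S_{d/2}(\varepsilon/2)$ and $S_{d/2+\pi}(\varepsilon/2)$ where $\varphi\in\OO^2$; since $|\sqrt t|^2=|t|$, the growth bound $|\varphi(x)|\le C_1e^{C_2|x|^2}$ yields $|g(t,z)|\le C_1'e^{C_2'|t|}$, i.e. $g\in\OO^{1}(\widehat S_d(\varepsilon))$. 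By the definition of $k$-summability with $k=1$ this is exactly what is required to conclude that $\hat u$ is $1$-summable in the direction $d$.

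It then remains to evaluate the $1$-sum $u^\theta=\mathcal A_{1,\frac12,\theta}g$. Here I would first compute the Ecalle kernel for $\alpha=\tilde k/\bar k=2$: summing (\ref{eq:ecalle_kernel}), noting that the odd terms vanish, and using $\Gamma(\tfrac12-m)=(-4)^m m!\sqrt\pi/(2m)!$ gives the closed form $C_2(\tau)=\tfrac1{\sqrt\pi}e^{-\tau^2/4}$. Substituting this into the acceleration integral, writing $\sigma$ for the integration variable on $e^{i\theta}\RR_+$ and changing variables $\sigma=s^2$ (so $s\in e^{i\theta/2}\RR_+$ and $d\sigma^{1/2}=ds$), the factor $C_2((\sigma/t)^{1/2})=\tfrac1{\sqrt\pi}e^{-s^2/(4t)}$ together with $g(s^2,z)=\tfrac12(\varphi(z+s)+\varphi(z-s))$ and the prefactor $t^{-1/2}=1/\sqrt{t}$ reproduce precisely (\ref{eq:heat_solution}), since $1/(2\sqrt\pi\sqrt t)=1/\sqrt{4\pi t}$. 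Convergence of this integral, and hence holomorphy of $u^\theta$ on $S_\theta(\pi-\tilde\varepsilon,r)\times D$, follows by balancing the Gaussian decay against the growth of $\varphi$: along the contour $\mathrm{Re}(s^2/(4t))=\tfrac{|s|^2}{4|t|}\cos(\arg t-\theta)$ stays bounded below by $\tfrac{|s|^2}{4|t|}\sin(\tilde\varepsilon/2)$ for $t\in S_\theta(\pi-\tilde\varepsilon)$, choosing $\theta\in(d-\varepsilon/2,d+\varepsilon/2)$ keeps $z\pm s$ inside the admissible sectors, and taking $r$ small enough that $\sin(\tilde\varepsilon/2)/(4r)$ exceeds the growth rate $C_2'$ of $\varphi$ makes the integrand absolutely integrable, uniformly in $z\in D_r$.

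The step I expect to be the main obstacle is this last one: the simultaneous bookkeeping that (i) keeps the rotated contour $e^{i\theta/2}\RR_+$ inside the domain of holomorphy of $\varphi$ for every $z\in D_r$, and (ii) secures absolute convergence of the Laplace-type integral with the correct uniform opening $\pi-\tilde\varepsilon$ in $t$. Everything else — the recursion, the deceleration identity, and the evaluation $C_2(\tau)=\tfrac1{\sqrt\pi}e^{-\tau^2/4}$ — is a direct computation once the moment-summability formalism recalled in the excerpt is in place.
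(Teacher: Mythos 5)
Your proposal is correct and takes essentially the same approach as the paper's source: the statement is quoted here without proof from \cite[Theorem 4]{Mic-Pod}, and the argument there runs exactly through the moment-summability framework recalled in Section 2 --- computing $\hat u(t,z)=\sum_{n\geq 0}\varphi^{(2n)}(z)t^n/n!$, identifying $\hat{\mathcal{D}}_{1,\frac{1}{2}}\hat u=\tfrac12\bigl(\varphi(z+\sqrt t)+\varphi(z-\sqrt t)\bigr)\in\OO^1(\widehat{S}_d(\varepsilon))$, and accelerating with the kernel $C_2(\tau)=e^{-\tau^2/4}/\sqrt{\pi}$ to obtain the heat-kernel representation. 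The two details you flag as routine (keeping $z\pm s$ in the domain of $\varphi$ uniformly for $z\in D_r$, and choosing $r$ so that $\sin(\tilde\varepsilon/2)/(4r)$ dominates the order-2 growth constant of $\varphi$) are indeed the only bookkeeping steps, and your treatment of them is sound.
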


Now consider the heat equation (\ref{eq:heat}) with $\varphi(z)\in\OO^2(\widetilde{{\CC}\setminus\{z_0\}})$ for some $z_0\in\CC\setminus \{0\}$.
First, observe that in this case $\mathcal{L}_{\delta}$ with $\delta:=2\theta:=2\arg z_0$ is a  Stokes line for $\hat{u}$.

For every sufficiently small $\varepsilon>0$ there exists $r>0$ such that for every fixed $z\in D_r$ the jump is given by 
\begin{displaymath}
u^{\delta^+}(t,z) - u^{\delta^-}(t,z)=F_z(s)\biggl[\frac{1}{\sqrt{4\pi t}}e^{-\frac{s^2}{4t}}\biggr],
\end{displaymath}
where $t\in S_{\delta}(\pi - \varepsilon, r)$ and
\begin{multline*}
 F_z(s)=\bigg[\varphi(s+z)+\varphi(z-s)\bigg]_{\theta_z}
 \\=\bigg[\varphi(s+z)\bigg]_{\theta_z} \in\OO^2\bigl(D\cup(S_{\theta}(\alpha)\setminus \mathcal{L}_{\theta_z})\bigr)\diagup
\OO^2\bigl(D\cup S_{\theta}(\alpha)\bigr)
\end{multline*}
and $\theta_z=\arg(z_0-z)$.

\begin{Rem}
In the remainder of this section  we assume that $t\in S_{\delta}(\pi - \varepsilon,r)$ and fixed $z\in D$ ($\varepsilon, r>0$).
\end{Rem}

Now we consider the case when $z_0$ is a single-valued singular point of the function  $\varphi(z)\in\OO^2({{\CC}\setminus\{z_0\}})$.

\begin{Th}
\label{tw:1}
Suppose that $\varphi(z)=\sum_{n=1}^{\infty}\frac{a_n}{(z-z_0)^n}+ \phi(z)$, where $a_1, a_2,...\in\CC$ and $\lim_{n \rightarrow \infty}\sqrt[n]{ |a_n|}<1$, $z_0\in\CC\setminus\{0\}$, $\phi(z)\in\OO^2(\CC)$. Then 
\[
F_z(s)=\biggl[\sum_{n=1}^{\infty}\frac{a_n}{(z+s-z_0)^n}\biggr]_{\theta_z}=-2\pi\*i \sum_{n=1}^{\infty}\frac{a_n (-1)^{n-1}}{(n-1)!}\delta^{(n-1)}(z+s-z_0),
\]
where $\delta$ is the Dirac function and $\delta^{(n-1)}$ denotes its $(n-1)$-th derivative.\\
Moreover, the jump is given by the convergent series
\begin{displaymath}
u^{\delta^+}(t,z) - u^{\delta^-}(t,z)
 =-i\sqrt{\frac{\pi}{t}}\sum_{n=1}^\infty \frac{a_n(-1)^{n-1}}{(n-1)!}\,\,\frac{\mathrm{d}^{n-1}}{\mathrm{d}s^{n-1}}e^{-\frac{s^2}{4t}}\Bigg|_{s=z_0-z}.
 \end{displaymath}
\end{Th}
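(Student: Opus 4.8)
The plan is to evaluate the hyperfunction $F_z$ directly from the K\"othe pairing \eqref{eq:kothe}, reading off both the Dirac representation and the jump as a single residue at the point $s_\ast:=z_0-z$, which lies on $\mathcal L_{\theta_z}$ precisely because $\theta_z=\arg(z_0-z)$. First I would isolate the principal part: writing $\varphi(s+z)=\sum_{n\ge1}a_n(s-s_\ast)^{-n}+\phi(s+z)$, the entire summand $\phi(s+z)$ is, since $\phi\in\OO^2(\CC)$, again of exponential growth of order $2$ in $s$ (translating the argument only alters the constants $C_1,C_2$), hence $\phi(s+z)\in\OO^2(\widehat S_{\theta_z})$ and represents the zero class. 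Therefore $F_z(s)=[\varphi(s+z)]_{\theta_z}$ admits $\varphi(\cdot+z)$ as a defining function; it is holomorphic on $\CC\setminus\{s_\ast\}$, its only singularity being $s_\ast$, and modulo $\OO^2(\widehat S_{\theta_z})$ only its principal part about $s_\ast$ contributes.

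Next I would turn the pairing into a residue. For a small test function $\psi\in\OO^{-2}(\widehat S_{\theta_z})$ formula \eqref{eq:kothe} reads
\[
F_z(s)[\psi]=\int_{\gamma_{\theta_z}}\varphi(s+z)\,\psi(s)\,ds=\Bigl(\int_{\gamma_{\theta_z^+}}-\int_{\gamma_{\theta_z^-}}\Bigr)\varphi(s+z)\,\psi(s)\,ds.
\]
Since $\psi$ decays of order $2$, the arc at infinity joining the two rays is negligible and the difference collapses to a clockwise loop around $s_\ast$, giving
\[
F_z(s)[\psi]=-2\pi i\,\mathrm{Res}_{s=s_\ast}\bigl(\varphi(s+z)\psi(s)\bigr).
\]
I would evaluate this residue on a circle $|s-s_\ast|=R$ with $\rho<R$, where $\rho:=\limsup_n\sqrt[n]{|a_n|}<1$; there the principal part converges uniformly, termwise integration is justified, and Cauchy's formula applied to each power yields
\[
F_z(s)[\psi]=-2\pi i\sum_{n=1}^{\infty}\frac{a_n}{(n-1)!}\,\psi^{(n-1)}(s_\ast).
\]

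Finally I would identify and specialise. Using $\delta^{(n-1)}(s-s_\ast)[\psi]=(-1)^{n-1}\psi^{(n-1)}(s_\ast)$ and $z+s-z_0=s-s_\ast$, the previous line is exactly the action of $-2\pi i\sum_{n\ge1}\tfrac{a_n(-1)^{n-1}}{(n-1)!}\delta^{(n-1)}(z+s-z_0)$, which is the first assertion. For the jump I would insert the Gaussian $\psi(s)=\tfrac{1}{\sqrt{4\pi t}}e^{-s^2/4t}$ recorded just before the theorem: for $t\in S_\delta(\pi-\varepsilon)$ one has $\arg(s^2/4t)\approx0$ along directions near $\theta_z$, so the Gaussian decays of order $2$ there and is an admissible test function. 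Pairing it with the Dirac representation and simplifying the constant $-2\pi i/\sqrt{4\pi t}=-i\sqrt{\pi/t}$ then produces the convergent series of the statement; its convergence is immediate because the factor $1/(n-1)!$ more than compensates the Hermite-type growth $\sim\sqrt{(n-1)!}$ of the successive Gaussian derivatives, while $\sqrt[n]{|a_n|}$ tends to a limit $<1$.

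The main obstacle I expect is the analytic bookkeeping of the pairing: proving that the arc at infinity truly drops out and that $\gamma_{\theta_z}$ may be deformed to a loop about $s_\ast$ while staying in $\CC\setminus\{s_\ast\}$ — which forces a choice $R\in(\rho,\mathrm{dist}(s_\ast,0))$, available for small $r$ thanks to $z_0\ne0$ and $\rho<1$ — together with the uniform-convergence justification for the termwise integration. The secondary delicate point is the consistent tracking of the orientation sign in \eqref{eq:kothe} and of the factors $(-1)^{n-1}$ produced by the distributional derivatives, which is precisely what fixes the coefficients in both displayed identities.
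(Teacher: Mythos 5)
Your contour argument is sound and is, in substance, the paper's own proof run in the opposite direction: the paper quotes from Kaneko the defining-function identity $\delta^{(n)}(x-a)=\bigl[-\frac{(-1)^n n!}{2\pi i (s-a)^{n+1}}\bigr]$, applies it termwise to the principal part, and then pairs the resulting delta series with the Gaussian, proving convergence exactly as you sketch, via the bound $\bigl|\frac{\mathrm{d}^{n-1}}{\mathrm{d}s^{n-1}}e^{-s^2/4t}\big|_{s=z_0-z}\bigr|\le \tilde A\tilde B^{\,n-1}((n-1)!)^{1/2}$ coming from Remark \ref{prop:3}; your collapse of the K\"othe contour to a clockwise loop about $s_\ast$ re-derives that identity rather than citing it, which is a legitimate (and self-contained) variant of the same mechanism. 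Two minor points: the constraint $R<\mathrm{dist}(s_\ast,0)$ is superfluous, since the integrand is holomorphic at $0$ and the circle need only lie in the annulus $|s-s_\ast|>\rho$; and under the standing hypothesis $\varphi\in\OO^2(\CC\setminus\{z_0\})$ made just before the theorem one in fact has $\rho=0$, so the uniform-convergence bookkeeping you defer is immediate.

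The genuine problem is your last step. Your residue line $F_z(s)[\psi]=-2\pi i\sum_{n\ge1}\frac{a_n}{(n-1)!}\psi^{(n-1)}(s_\ast)$ is correct, and with your stated convention $\delta^{(n-1)}(s-s_\ast)[\psi]=(-1)^{n-1}\psi^{(n-1)}(s_\ast)$ it does yield the first display. But then inserting $\psi(s)=\frac{1}{\sqrt{4\pi t}}e^{-s^2/4t}$ makes the two factors $(-1)^{n-1}$ cancel, and what your computation actually produces is
\[
u^{\delta^+}(t,z)-u^{\delta^-}(t,z)=-i\sqrt{\tfrac{\pi}{t}}\sum_{n=1}^{\infty}\frac{a_n}{(n-1)!}\,\frac{\mathrm{d}^{n-1}}{\mathrm{d}s^{n-1}}e^{-\frac{s^2}{4t}}\Big|_{s=z_0-z},
\]
i.e., the series of the statement \emph{without} the factor $(-1)^{n-1}$; your claim that the pairing ``produces the convergent series of the statement'' silently flips the sign of every even-$n$ term. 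The paper's proof, by contrast, passes from the delta series to the printed display by evaluating $\delta^{(n-1)}(s+z-z_0)[\psi]$ as $\frac{\mathrm{d}^{n-1}}{\mathrm{d}s^{n-1}}\psi\big|_{s=z_0-z}$ with no distributional sign---that is the only way the printed $(-1)^{n-1}$ survives---and this evaluation is incompatible with the convention you (correctly) use, since integration by parts along $\gamma_{\theta_z}$ forces $[g'][\psi]=-[g][\psi']$. A concrete check at $n=2$: $\frac{1}{(z-z_0)^2}=\partial_{z_0}\frac{1}{z-z_0}$, and differentiating the $n=1$ jump $-i\sqrt{\pi/t}\,a_1e^{-(z_0-z)^2/4t}$ with respect to $z_0$ gives $-i\sqrt{\pi/t}\,\frac{\mathrm{d}}{\mathrm{d}s}e^{-s^2/4t}\big|_{s=z_0-z}$, with no extra minus sign, in agreement with your residue line and in conflict with the printed formula. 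So your derivation is internally correct up to the final identification, but you needed to confront this sign discrepancy explicitly---either adopt the paper's sign-free evaluation of $\delta^{(n-1)}$ (and then abandon your stated convention, which your own contour calculus supports) or note that your method yields the displayed jump only modulo the factor $(-1)^{n-1}$---instead of asserting exact agreement.
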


\begin{proof} Observe that since $\delta(x)=\biggl[-\frac{1}{2\pi i s}\biggr]$ (see \cite{Kaneko}), then $\delta(x-a)=\biggl[-\frac{1}{2\pi i (s-a)}\biggr]$ (where $a\in\RR$) and differentiating it $n$-times one can easily obtain 
\[
\delta^{(n)}(x-a)=\biggl[-\frac{(-1)^n n!}{2\pi i (s-a)^{n+1}}\biggr] \Longrightarrow -\frac{2\pi i\,\delta^{(n-1)}(x-a)}{(-1)^{n-1}(n-1)!}=\biggl[\frac{1}{(s-a)^n}\biggr].
\]
Notice that the same holds for $a=z_0-z\in\CC$.\\
Hence we derive
\[
F_z(s)=\bigg[\sum_{n=1}^\infty \frac{a_n}{(s+z-z_0)^{n}}\bigg]_{\theta_z}=-2\pi i \sum_{n=1}^\infty \frac{a_n (-1)^{n-1}}{(n-1)!}\delta^{(n-1)}(s+z-z_0).
\]
Thus
\begin{multline*}
u^{\delta^+}(t,z) - u^{\delta^-}(t,z)=F_z(s)\biggl[\frac{1}{\sqrt{4\pi t}}e^{-\frac{s^2}{4t}}\biggr]\\
=-2\pi i \sum_{n=1}^\infty \frac{a_n (-1)^{n-1}}{(n-1)!}\delta^{(n-1)}(s+z-z_0)\biggl[\frac{1}{\sqrt{4\pi t}}e^{-\frac{s^2}{4t}}\biggr]\\
=-i\sqrt{\frac{\pi}{t}}\sum_{n=1}^\infty \frac{a_n(-1)^{n-1}}{(n-1)!}\,\,\frac{\mathrm{d}^{n-1}}{\mathrm{d}s^{n-1}}e^{-\frac{s^2}{4t}}\Bigg|_{s=z_0-z}.
\end{multline*}
It remains to prove the convergence of the series above.\\
Notice that by Remark \ref{prop:3} since $s\mapsto  e^{-\frac{s^2}{4t}}\in\OO^2(\CC)$, then there exist $\tilde A, \tilde B>0$ such that for every $t\in S(\theta, \pi-\tilde{\varepsilon},r)$ and $z\in D_r$ (for every sufficiently small $\varepsilon>0$ and $\tilde\varepsilon\in(0,\varepsilon))$ we have that
$$ \Bigg|\delta^{(n-1)}(z+s-z_0)\bigg[e^{-\frac{s^2}{4t}}\bigg]\Bigg|=\Bigg|\frac{\mathrm{d}^{n-1}}{\mathrm{d}s^{n-1}}e^{-\frac{s^2}{4t}}\bigg|_{s=z_0-z}\Bigg|\leq \tilde A \tilde B ^{(n-1)} ((n-1)!)^{\frac{1}{2}},$$
 so 
\begin{multline*} 
\big|u^{\delta^+}(t,z) - u^{\delta^-}(t,z)\big|= \biggl|-i\sqrt{\frac{\pi}{t}}\sum_{n=1}^{\infty}\frac{a_n (-1)^{(n-1)}}{(n-1)!}\delta^{(n-1)}(z+s-z_0)\bigg[e^{-\frac{s^2}{4t}}\bigg]\biggr|\\
 \leq\sqrt{\frac{\pi}{t}}\sum_{n=1}^{\infty}\frac{|a_n|}{(n-1)!}\biggl|\delta^{(n-1)}(z+s-z_0)\bigg[e^{-\frac{s^2}{4t}}\bigg]\biggr|\\
\leq \sqrt{\frac{\pi}{t}}\sum_{n=1}^{\infty}\frac{|a_n|}{(n-1)!}\tilde A \tilde B ^{(n-1)} ((n-1)!)^{\frac{1}{2}}
 = \sqrt{\frac{\pi}{t}}\tilde A\sum_{n=1}^{\infty}\frac{|a_n| \tilde B ^{(n-1)}}{((n-1)!)^{\frac{1}{2}}} <\infty,
  \end{multline*}
  because $\lim_{n \rightarrow \infty}\sqrt[n]{ |a_n|}<1$. Thus this implies the convergence of $u^{\delta^+}(t,z) - u^{\delta^-}(t,z)$.
\end{proof}

In particular, from the above theorem we obtain the following examples.
\begin{Ex}
 Assume now $\varphi(z)=\sum_{n=1}^N\frac{a_n}{(z-z_0)^n}+ \phi(z)$, for some $z_0\in\CC\setminus\{0\}$, where $N\in\NN\setminus\{0\}$, $a_1, a_2,...a_N\in\CC$ and $\phi(z)\in\OO^2(\CC)$. Then 
\[
F_z(s)=\biggl[\sum_{n=1}^{N}\frac{a_n}{(z+s-z_0)^n}\biggr]_{\theta_z}=-2\pi\*i \sum_{n=1}^N \frac{a_n (-1)^{n-1}}{(n-1)!}\delta^{(n-1)}(z+s-z_0),
\]
and the jump is given by
\begin{displaymath}
u^{\delta^+}(t,z) - u^{\delta^-}(t,z)
=-i\sqrt{\frac{\pi}{t}}\sum_{n=1}^N \frac{a_n(-1)^{n-1}}{(n-1)!}\,\,\frac{\mathrm{d}^{n-1}}{\mathrm{d}s^{n-1}}e^{-\frac{s^2}{4t}}\Bigg|_{s=z_0-z}.
 \end{displaymath}
\end{Ex}

\begin{Ex}
Let $\varphi(z)=e^{\frac{1}{z-z_0}}$ for some $z_0\in\CC\setminus\{0\}$. Then 
\[
F_z(s)=\bigg[e^{\frac{1}{z+s-z_0}}\biggr]_{\theta_z}=-2\pi\*i \sum_{k=0}^{\infty}\frac{(-1)^k}{k!(k+1)!}\delta^{(k)}(z+s-z_0),
\]
and the jump is given by
\begin{displaymath}
u^{\delta^+}(t,z) - u^{\delta^-}(t,z)
= -i\sqrt{\frac{\pi}{t}}\sum_{k=0}^{\infty}\frac{(-1)^k}{k!(k+1)!}\,\,\frac{\mathrm{d}^k}{\mathrm{d}s^k} e^{-\frac{s^2}{4t}}\Bigg|_{s=z_0-z}.
\end{displaymath}
\end{Ex}

\bigskip

Let us now consider the general case. For this purpose fix $z\in D$. For $s\in\mathcal L_{\theta_z}$ define (similarly as in \cite{Im} and \cite{S-S}) a function on $\mathcal L_{\theta_z}$ by 
\begin{displaymath}
\mathrm{var}F_z(s)=\left\{\begin{array}{ll}
0 & ,\textrm{if $|s|<|z_0-z|$}\\
\varphi\bigl(z_0+(s+z-z_0)e^{2\pi i}\bigr)- \varphi(s+z) & ,\textrm{if $|s|>|z_0-z|$},
\end{array}\right.
\end{displaymath}
and a Heaviside function in a direction $\theta_z$ by
\begin{displaymath}
H_{\theta_z}(xe^{i\theta_z})=\left\{\begin{array}{ll}
1 & ,\textrm{for $x>0$}\\
0 & ,\textrm{for $x<0$},
\end{array}\right.
\end{displaymath}
thus $F_z(s)=\bigg[\varphi(s+z)\bigg]_{\theta_z}=-\mathrm{var}F_z(s)=-\mathrm{var}F_z(s) H_{\theta_z}(s+z-z_0)$.\\
So $u^{\delta^+}(t,z) - u^{\delta^-}(t,z)=-\mathrm{var}F_z(s)\bigg[ \frac{1}{\sqrt{4\pi t}}\,\,e^{-\frac{s^2}{4t}}\bigg]$ where, in general, $-\mathrm{var}F_z(s)$ is an analytic functional on $\mathcal L_{\theta_z}$.

\emph{Notation}. The set of all measurable functions $f:\mathcal{L}_{\theta_z}\rightarrow\CC$ such that $\int_K |f|dx<\infty$ for all compact sets $K\subset\mathcal{L}_{\theta_z}$ will be denoted by $L^1_{\mathrm{loc}}(\mathcal{L}_{\theta_z}).$

\begin{Th}
\label{tw:3}
Under the above assumptions we have several cases to discuss
\begin{enumerate}
\item $\mathrm{var}F_z(s)\in L^1_{\mathrm{loc}}(\mathcal L_{\theta_z})$ and is an analytic function of exponential growth of order at most 2 for $|s|>|z_0-z|$.\\
Then for every sufficiently small $\varepsilon>0$ there exists $r>0$ such that the jump is given by
\begin{displaymath}
u^{\delta^+}(t,z) - u^{\delta^-}(t,z)=-\int_{z_0-z}^{e^{i\theta_z}\infty} \mathrm{var} F_z(s)\,\,\frac{1}{\sqrt{4\pi t}}\,\,e^{-\frac{s^2}{4t}}\,\,ds,
\end{displaymath}
for $(t,z)\in S_{\delta}( \pi -\varepsilon, r)\times D.$\\

\item $\mathrm{var}F_z(s)$ is a distribution on $\mathcal L_{\theta_z}$ and is an analytic function of exponential growth of order at most 2 for $|s|>|z_0-z|$.\\
 Then there exist $m\in\NN$ and $\mathrm{var}\tilde F_z(s)$ satisfying the assumptions of the case (1) such that $$\frac{\mathrm{d}^m}{\mathrm{d}s^m}\mathrm{var}\tilde F_z(s)=\mathrm{var}F_z(s).$$
Moreover, for every sufficiently small $\varepsilon>0$ there exists $r>0$ such that the jump is given by 
\begin{multline*}
u^{\delta^+}(t,z) - u^{\delta^-}(t,z)=-\mathrm{var}F_z(s)\bigg[\frac{1}{\sqrt{4\pi t}}e^{-\frac{s^2}{4t}}\bigg]\\
=-\frac{\mathrm{d}^m}{\mathrm{d}s^m}\mathrm{var}\tilde F(s) \bigg[\frac{1}{\sqrt{4\pi t}}e^{-\frac{s^2}{4t}}\bigg]=-\mathrm{var}\tilde F(s) \bigg[(-1)^m\frac{\mathrm{d}^m}{\mathrm{d}s^m}\bigg(\frac{1}{\sqrt{4\pi t}}e^{-\frac{s^2}{4t}}\bigg)\bigg]\\
= -\int_{z_0-z}^{e^{i\theta_z}\infty} \mathrm{var}\tilde F_z(s)\,(-1)^m\frac{\mathrm{d}^m}{\mathrm{d}s^m}\bigg(\frac{1}{\sqrt{4\pi t}}\,\,e^{-\frac{s^2}{4t}}\bigg)\,\,ds,
\end{multline*}
for $(t,z)\in S_{\delta}( \pi -\varepsilon, r)\times D.$\\

\item $\mathrm{var}F_z(s)$ is an analytic functional on $\mathcal L_{\theta_z}$.\\
Then $\mathrm{var}F_z(s)=\sum_{n=0}^\infty \mathrm{var}F_{z,n}(s)$, where $\mathrm{var}F_{z,n}(s)$ satisfy the assumptions of the case (2). So for every $n\in\NN$ there exists $k_n\in\NN$ and $\mathrm{var}\tilde F_{z,n}(s)$ satisfying the assumptions of the case (1) such that $$\mathrm{var}F_{z,n}(s)=\frac{\mathrm{d}^{k_n}}{\mathrm{d}s^{k_n}}\mathrm{var}\tilde F_{z,n}(s).$$
Moreover, for every sufficiently small $\varepsilon>0$ there exists $r>0$ such that the jump is given by 
\begin{multline*}
u^{\delta^+}(t,z) - u^{\delta^-}(t,z)=-\mathrm{var}F_z(s)\bigg[\frac{1}{\sqrt{4\pi t}}e^{-\frac{s^2}{4t}}\bigg]
=-\sum_{n=0}^\infty\mathrm{var}F_{z,n}(s)\bigg[\frac{1}{\sqrt{4\pi t}}e^{-\frac{s^2}{4t}}\bigg]\\
=-\sum_{n=0}^\infty \int_{z_0-z}^{e^{i\theta_z}\infty} \mathrm{var}\tilde F_{z,n}(s)\,(-1)^{k_n}\frac{\mathrm{d}^{k_n}}{\mathrm{d}s^{k_n}} \bigg(\frac{1}{\sqrt{4\pi t}}\,\,e^{-\frac{s^2}{4t}}\bigg)ds,
\end{multline*}
for $(t,z)\in S_{\delta}( \pi -\varepsilon, r)\times D.$\\
\end{enumerate}

\end{Th}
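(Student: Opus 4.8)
The plan is to evaluate the pairing
\[
u^{\delta^+}(t,z) - u^{\delta^-}(t,z) = -\mathrm{var}F_z(s)\Big[\tfrac{1}{\sqrt{4\pi t}}\,e^{-\frac{s^2}{4t}}\Big]
\]
recorded above, treating the three hypotheses on $\mathrm{var}F_z$ in increasing order of generality. The common preliminary remark is that the kernel $\psi(s):=\frac{1}{\sqrt{4\pi t}}e^{-\frac{s^2}{4t}}$ is an admissible test object on $\mathcal L_{\theta_z}$: writing $s=xe^{i\theta_z}$ and using $\arg t\in(\delta-\frac{\pi-\varepsilon}{2},\delta+\frac{\pi-\varepsilon}{2})$ with $\delta=2\theta_z$, one gets $\mathrm{Re}(s^2/t)=\frac{|s|^2}{|t|}\cos(\delta-\arg t)\ge c|s|^2$ for some $c>0$, so $\psi\in\OO^{-2}(\widehat{S}_{\theta_z})$ decays like a Gaussian along the ray. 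This decay, which is exactly matched to the exponential-growth-order-$2$ hypothesis on $\mathrm{var}F_z$, is what renders every integral below absolutely convergent.

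For case (1) I would insert the defining function into the K\"othe representation (\ref{eq:kothe}): since $F_z(s)=[\varphi(s+z)]_{\theta_z}$, the pairing becomes $\int_{\gamma_{\theta_z}}\varphi(s+z)\psi(s)\,ds$ over $\gamma_{\theta_z}=-\gamma_{\theta_z^-}+\gamma_{\theta_z^+}$. On the segment $\{|s|<|z_0-z|\}$ the function $\varphi(s+z)$ is single-valued and holomorphic, so the two opposite boundary contributions cancel; on $\{|s|>|z_0-z|\}$ the difference of the two boundary branches equals the monodromy term $\varphi\big(z_0+(s+z-z_0)e^{2\pi i}\big)-\varphi(s+z)=\mathrm{var}F_z(s)$. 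Collecting orientations returns precisely $-\int_{z_0-z}^{e^{i\theta_z}\infty}\mathrm{var}F_z(s)\,\psi(s)\,ds$, convergent by the preliminary estimate. The delicate point here is the orientation bookkeeping needed to identify the boundary jump with $\mathrm{var}F_z$ and to confirm the cancellation over the holomorphic segment; everything else is routine.

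For case (2) I would invoke the structure theorem for distributions on a one-dimensional contour: a distribution of finite order is an $m$-th derivative $\frac{\mathrm d^m}{\mathrm ds^m}\mathrm{var}\tilde F_z$ of a continuous, hence locally integrable, primitive. One checks that repeated integration raises neither the growth order nor, up to polynomial factors, the growth constant, so $\mathrm{var}\tilde F_z$ still has exponential growth at most $2$ and satisfies the hypotheses of case (1). Transferring the $m$ derivatives onto $\psi$ by integration by parts produces the factor $(-1)^m$ and the kernel $\frac{\mathrm d^m}{\mathrm ds^m}\psi$, which is again of Gaussian decay, and applying case (1) to $\mathrm{var}\tilde F_z$ against this kernel yields the stated formula.

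For case (3) the functional $\mathrm{var}F_z$ is decomposed into a series $\sum_{n=0}^\infty\mathrm{var}F_{z,n}$ of finite-order distributions, each reduced by case (2) to a primitive $\mathrm{var}\tilde F_{z,n}$ of case-(1) type, and termwise summation produces the claimed series of integrals. I expect this to be the main obstacle: one must first produce such a decomposition from the analytic-functional structure of $\mathrm{var}F_z$, and then, more importantly, prove that the resulting series converges uniformly on $S_\delta(\pi-\varepsilon,r)\times D$. The convergence rests on Gevrey-type bounds for the kernel derivatives $\frac{\mathrm d^{k_n}}{\mathrm ds^{k_n}}\psi$ --- the same $((k_n)!)^{1/2}$ estimate coming from Remark \ref{prop:3} that was used in the proof of Theorem \ref{tw:1} --- weighed against the decay of the coefficients furnished by the decomposition, so that the majorant series is summable.
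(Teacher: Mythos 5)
Your proposal is correct and follows essentially the same route as the paper: the boundary-value computation $\lim_{\varepsilon\to 0^+}$ along the two sides of $\mathcal L_{\theta_z}$ with cancellation on $|s|<|z_0-z|$ and the monodromy term $\mathrm{var}F_z$ on $|s|>|z_0-z|$ for case (1), the local structure theorem for distributions plus integration by parts for case (2), and the series decomposition into finite-order distributions for case (3). The only difference is one of emphasis: the paper simply cites Kaneko's results for the existence of the decomposition in case (3) and verifies summability only in the concrete Example \ref{ex:4}, whereas you correctly flag the uniform convergence of the resulting series (via the Gevrey-type kernel bounds from Remark \ref{prop:3}, as in the proof of Theorem \ref{tw:1}) as a point that deserves explicit proof.
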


\begin{proof}
Ad.(1) First observe that for every $z\in D$ the function $s\mapsto\mathrm{var} F_z(s)$ is analytic on $\mathcal L_{\theta_z}\setminus\{z_0-z\}$, locally integrable and has an exponential growth of order at most 2 as $s\rightarrow \infty, s\in\mathcal L_{\theta_z}$. Hence for every sufficiently small $\varepsilon>0$ there exists $r>0$ such that  the integral $u^{\delta^+}(t,z) - u^{\delta^-}(t,z)$ is well defined for $(t,z)\in S_{\delta}( \pi -\varepsilon, r)\times D.$

For $z_0-z=x_0e^{i\theta_z}$ and $s=xe^{i\theta_z}$, where $x_0, x>0$, we obtain
\begin{multline*}
u^{\delta^+}(t,z) - u^{\delta^-}(t,z)=F_z(s)\biggl[\frac{1}{\sqrt{4\pi t}}e^{-\frac{s^2}{4t}}\biggr]\\
=\frac{1}{\sqrt{4\pi t}} \lim_{\varepsilon\longrightarrow 0^+}\biggl\{ \int_0^\infty \varphi\bigl((x+i\varepsilon)e^{i\theta_z}+z_0 -x_0
e^{i\theta_z}\bigr)e^{-\frac{1}{4t}\big((x+i\varepsilon)e^{i\theta_z}\big)^2}e^{i\theta_z}dx\\
- \int_0^\infty \varphi\bigl((x-i\varepsilon)e^{i\theta_z}+z_0 -x_0 e^{i\theta_z}\bigr)e^{-\frac{1}{4t}\big((x-i\varepsilon)e^{i\theta_z}\big)^2}e^{i\theta_z}dx \biggr\}\\
 =\frac{1}{\sqrt{4\pi t}} \int_0^\infty e^{-\frac{1}{4t}(xe^{i\theta_z})^2}e^{i\theta_z}  \lim_{\varepsilon\longrightarrow 0^+}
 \bigg\{\varphi\bigl((x+i\varepsilon-x_0)e^{i\theta_z}+z_0\bigr)\\
  - \varphi\bigl((x-i\varepsilon-x_0)e^{i\theta_z}+z_0\bigr)\bigg\}\,dx=(*) 
 \end{multline*} 
Observe that
\begin{itemize}
\item for $x - x_0>0$, we have
\begin{multline*} 
\lim_{\varepsilon\longrightarrow 0^+} \bigg\{\varphi\bigl((x+i\varepsilon-x_0)e^{i\theta_z}+z_0\bigr) - \varphi\bigl((x-i\varepsilon-x_0)e^{i\theta_z}+z_0\bigr)\bigg\}\\=\varphi\bigl((x-x_0)e^{i\theta_z}+z_0\bigr)  -\varphi\bigl((x-x_0)e^{i\theta_z}e^{2\pi i}+z_0\bigr)
\end{multline*}

\item for $x - x_0<0$, we have
\begin{multline*}
\lim_{\varepsilon\longrightarrow 0^+} \bigg\{\varphi\bigl((x+i\varepsilon-x_0)e^{i\theta_z}+z_0\bigr) - \varphi\bigl((x-i\varepsilon-x_0)e^{i\theta_z}+z_0\bigr)\bigg\}=0.
\end{multline*}
\end{itemize}
Hence
\begin{multline*}
(*)=\frac{1}{\sqrt{4\pi t}} \int_{x_0}^\infty\bigg\{\varphi\bigl((x-x_0)e^{i\theta_z}+z_0\bigr)  -\varphi\bigl((x-x_0)e^{i\theta_z}e^{2\pi i}+z_0\bigr)\bigg\} \, e^{-\frac{(xe^{i\theta_z})^2}{4t}}e^{i\theta_z} dx\\
= \frac{1}{\sqrt{4\pi t}}\int_{z_0 -z}^{e^{i\theta_z}\infty}\bigg(\varphi(s+z)-\varphi\bigl((s+z-z_0)e^{2\pi i}+z_0\bigr) \bigg)e^{-\frac{s^2}{4t}}ds\\
=-\int_{z_0-z}^{e^{i\theta_z}\infty} \mathrm{var} F_z(s)\,\,\frac{1}{\sqrt{4\pi t}}\,\,e^{-\frac{s^2}{4t}}\,\,ds.
\end{multline*}\\

Ad.(2) Observe that since $\mathrm{var}F_z(s)$ is continuous on $\mathcal L_{\theta_z}\setminus\{z_0-z\}$, by the locally structure theorem for distributions (see Proposition 7.1 \cite{ElKinani} ), there exist $m\in\NN$ and $\mathrm{var}\tilde F_z(s)\in L^1_{\mathrm{loc}}(\mathcal L_{\theta_z})$ such that $$\frac{\mathrm{d}^m}{\mathrm{d}s^m}\mathrm{var}\tilde F_z(s)=\mathrm{var}F_z(s).$$
Furthermore, $\mathrm{var} F_z(s)$ has exponential growth of order at most 2 as $s\rightarrow \infty, s\in\mathcal L_{\theta_z}$, then also $\mathrm{var} \tilde F_z(s)$ has an exponential growth of order at most 2 as $s\rightarrow \infty, s\in\mathcal L_{\theta_z}$. The rest of the proof is analogous to the proof of the case (1).\\

Ad.(3) Notice that since $\mathrm{var}F_{z,n}(s)$ obey the assumptions of the case (2) and based on results in \cite{Kaneko1}
we can write $\mathrm{var}F_z(s)=\sum_{n=0}^\infty \mathrm{var}F_{z,n}(s)$, where $\mathrm{var}F_{z,n}(s)$ satisfy the assumptions of the case (2). Then for every $n\in\NN$ there exists $k_n\in\NN$ and $\mathrm{var}\tilde F_{z,n}(s)$ satisfying the assumptions of the case (1) such that $\mathrm{var}F_{z,n}(s)=\frac{\mathrm{d}^{k_n}}{\mathrm{d}s^{k_n}}\mathrm{var}\tilde F_{z,n}(s).$ The rest of the proof is also similar to the proof of the case (1).
\end{proof}

Now we give two examples of the function $\varphi(z)$ satisfying the case (1) of Theorem \ref{tw:3}.
\begin{Ex}
\label{ex:2}
Assume that $\varphi(z)=\ln(z-z_0)$ for some $z_0\in\CC\setminus\{0\}$. Then 
\[\mathrm{var}F_z(s)=2\pi i H_{\theta_z}(s+z-z_0),\]
and the jump is given by
$$u^{\delta^+}(t,z) - u^{\delta^-}(t,z)=-i\sqrt{\frac{\pi}{t}}\int_{z_0 -z}^{e^{i\theta_z}\infty} e^{-\frac{s^2}{4t}}ds.$$
Indeed, for $|s|>|z_0-z|$ we derive
\begin{multline*}
\mathrm{var} F_z (s)=\varphi\bigl(z_0+(s+z-z_0)e^{2\pi i}\bigr)- \varphi(s+z)=\\
= \ln\biggl(z_0+(s+z-z_0)e^{2\pi i}-z_0\biggr)- \ln\biggl((s+z)-z_0\biggr)=\\
=\ln\bigl((s+z-z_0)e^{2\pi i}\bigr)-\ln(s+z-z_0)=2\pi i.
\end{multline*}
\end{Ex}

\begin{Ex}
\label{ex:3}
Let $\varphi(z)=(z-z_0)^\lambda$ for some $z_0\in\CC\setminus\{0\}$, $\lambda\notin \ZZ$ and $\lambda>-1$.
Then 
\[\mathrm{var} F_z(s)=2i H_{\theta_z}(z+s-z_0) (-s-z+z_0)^\lambda \sin(\lambda\pi),\]
and the jump is given by
\[
u^{\delta^+}(t,z) - u^{\delta^-}(t,z)=-\frac{i}{\sqrt{\pi t}}  \int_{z_0 -z}^{e^{i\theta_z}\infty} e^{-\frac{s^2}{4t}}(-s-z+z_0)^\lambda \sin(\lambda\pi)ds.
\]

 More precisely, for $|s|>|z_0-z|$
\begin{multline*}
\mathrm{var} F_z (s)=\varphi\bigl(z_0+(z+s-z_0)e^{2\pi i}\bigr)- \varphi(z+s)\\
= \biggl(\bigl(z_0+(z+s-z_0)e^{2\pi i}\bigr)-z_0\biggr)^{\lambda}- \biggl((z+s)-z_0\biggr)^{\lambda}\\
=\bigl((z+s-z_0)e^{2\pi i}\bigr)^{\lambda}-(z+s-z_0)^{\lambda}=(z+s-z_0)^{\lambda} (e^{2\pi i\lambda} -1)\\
=2i(-1)^{\lambda}(z+s-z_0)^{\lambda}\sin(\pi\lambda),
\end{multline*}
because 
\begin{multline*}
\sin(\pi\lambda)=\frac{e^{i\pi\lambda}-e^{-i\pi\lambda}}{2i}=\frac{e^{2i\pi\lambda}-1}{2ie^{i\pi\lambda}}\\
 \Longrightarrow e^{2i\pi\lambda}-1=2ie^{i\pi\lambda}\sin(\pi\lambda)=2i(-1)^{\lambda}\sin(\pi\lambda).
\end{multline*}
\end{Ex}

\bigskip

Now we present an example of the function $\varphi(z)$ satisfying the case (2) of Theorem \ref{tw:3}.
\begin{Ex}
Let again $\varphi(z)=(z-z_0)^\lambda$ for some $z_0\in\CC\setminus\{0\}$, $\lambda\notin \ZZ$ and $\lambda <-1$.
Then for $m=\lfloor -\lambda \rfloor$ we can define $\mathrm{var}\tilde {F}_z (s)\in L^1_{\mathrm{loc}}(\mathcal L_{\theta_z})$ by
\[
\mathrm{var}\tilde {F}_z (s)=\frac{2i(-1)^{\lambda+m}\sin(\pi(\lambda+m))}{(\lambda+1)(\lambda+2)\dots(\lambda+m)}(s+z-z_0)^{\lambda+m}H_{\theta_z}(s+z-z_0),
\]
thus
\begin{multline*}
\mathrm{var}F_z(s)=\frac{\mathrm{d}^m}{\mathrm{d}s^m}\mathrm{var}\tilde {F}_z (s)\\
= \frac{\mathrm{d}^m}{\mathrm{d}s^m}\bigg\{\frac{2i(-1)^{\lambda+m}\sin(\pi(\lambda+m))}{(\lambda+1)(\lambda+2)\dots(\lambda+m)}(s+z-z_0)^{\lambda+m}H_{\theta_z}(s+z-z_0)\bigg\},
\end{multline*}
and the jump is given by
\begin{multline*}
u^{\delta^+}(t,z) - u^{\delta^-}(t,z)\\
=\frac{-i}{\sqrt{\pi t}}\int_{z_0-z}^{e^{i\theta_z}\infty} \frac{(-1)^\lambda (s+z-z_0)^{\lambda+m}\sin((\lambda+m)\pi)}{(\lambda+1)(\lambda+2)\dots (\lambda+m)}\frac{\mathrm{d}^m}{\mathrm{d}s^m}\bigg(e^{-\frac{s^2}{4t}}\bigg)ds.
\end{multline*}
\end{Ex}

\bigskip

Finally, we give an example of the function $\varphi(z)$ that satisfies the case (3) of Theorem \ref{tw:3}.
\begin{Ex} 
\label{ex:4}
Assume now that $\varphi(z)=e^{\frac{1}{(z-z_0)^{\lambda}}}$ where $z_0\in\CC\setminus\{0\}$, $\lambda\notin\QQ$ and $\lambda>0$. Then for $k_n=\lfloor \lambda n \rfloor$ we can define functions $\mathrm{var}\tilde F_{z,n}(s)\in L_{\mathrm{loc}}^1(\mathcal L_{\theta_z})$ by
\[
\mathrm{var}\tilde F_{z,n}(s)=\frac{2i(-s-z+z_0)^{-\lambda n+k_n}\sin((-\lambda n +k_n)\pi)}{n! (-\lambda n+1)(-\lambda n+2)\dots (-\lambda n +k_n)}H_{\theta_z}(s+z-z_0),
\]
and
\begin{multline*}
\mathrm{var}F_z(s)=\sum_{n=0}^{\infty}\mathrm{var}F_{z,n}(s)=\sum_{n=0}^{\infty}\frac{\mathrm d^{k_n}}{\mathrm d s^{k_n}}\mathrm{var}\tilde F_{z,n}(s)\\
=\bigg(\sum_{n=0}^\infty \frac{\mathrm d^{k_n}}{\mathrm ds^{k_n}}\frac{2i(-s-z+z_0)^{-\lambda n+k_n}\sin((-\lambda n +k_n)\pi)}{n! (-\lambda n+1)(-\lambda n+2)\dots (-\lambda n +k_n)}\bigg)H_{\theta_z}(s+z-z_0).
\end{multline*}
Then the jump is given by
\begin{multline*}
u^{\delta^+}(t,z) - u^{\delta^-}(t,z)=\\
-\int_{z_0-z}^{e^{i\theta_z}\infty}\sum_{n=0}^\infty \frac{i(-s-z+z_0)^{-\lambda n+k_n}\sin((-\lambda n +k_n)\pi)}{\sqrt{\pi t}n! (-\lambda n+1)(-\lambda n+2)\dots (-\lambda n +k_n)}\biggl[(-1)^{k_n}\frac{\mathrm d^{k_n}}{\mathrm ds^{k_n}}\bigg(e^{-\frac{s^2}{4t}}\bigg)\biggr]ds.\\
\end{multline*}
Observe that by Remark \ref{prop:3} since $s\mapsto e^{-\frac{s^2}{4t}}\in\OO^2(\CC)$, then there exist $ A, B>0$ such that for every $t\in S(\theta, \pi-\tilde{\varepsilon},r)$ and $z\in D_r$ (for every sufficiently small $\varepsilon>0$ and $\tilde\varepsilon\in(0,\varepsilon))$ we have that
\\ \bigg|$(-1)^{k_n}\frac{\mathrm d^{k_n}}{\mathrm ds^{k_n}}\bigg(e^{-\frac{s^2}{4t}}\bigg)\bigg|\leq A B ^{\lambda n} (n!)^{\frac{\lambda}{2}}$ 
and 
$$\bigg|\frac{1}{n! (-\lambda n+1)(-\lambda n+2)\dots (-\lambda n +k_n)}\bigg|\leq\frac{1}{(n!)^{\frac{\lambda}{2}+1}}<\infty$$
hence analogously to the proof of Theorem \ref{tw:1} we obtain the convergence of the above series.
\end{Ex}

\bigskip
At the end of this section, we similarly derive jumps for the following generalization of the heat equation
 \begin{equation}
 \label{eq:generalization}
 \begin{cases}
\partial_t^p u(t,z)=\partial_z^q u(t,z),\,\, p,q\in\NN,\,\,1\leq p<q\\
u(0,z)=\varphi(z) \in\OO(D)\\
\partial_{t}^{j}u(0,z)=0\ \textrm{for}\,\, j=1,2,\dots,p-1,
 \end{cases}
\end{equation}
with $\varphi(z)\in\OO^{\frac{q}{q-p}}\Bigl(D\cup \bigcup_{l=0}^{q-1} S_{\frac{dp}{q}+\frac{2\pi\*l}{q}}(\frac{\varepsilon p}{q})\Bigr)$ for some $\varepsilon>0$.

Then a unique formal solution $\hat{u}(t,z)$ of this Cauchy problem is $\frac{p}{q-p}$-summable in the direction $d$ and for every
$\psi\in(d-\frac{{\varepsilon}}{2},d+\frac{{\varepsilon}}{2})$ and for every $\tilde{\varepsilon}\in(0,\varepsilon)$ there exists $r>0$ such that
its $\frac{p}{q-p}$-sum $u\in\OO(S_d(\frac{\pi(q-p)}{p}-\tilde{\varepsilon},r)\times D)$ is given by (see \cite[Theorem 6]{Mic-Pod})
\begin{multline}
 \label{eq:generalization_solution}
u(t,z)=u^{\psi}(t,z)=\frac{1}{q\*\sqrt[q]{t^p}}\*\int_{0}^{e^{\frac{i\psi p}{q}}\infty}\bigl(\varphi(z+s)+\dots+\varphi(z+e^{\frac{2(q-1)\pi\*i}{q}}\*s)\bigr)C_{\frac{q}{p}}(\frac{s}{\sqrt[q]{t^p}})ds.
\end{multline}
\bigskip
As in the case of the heat equation (\ref{eq:heat}), we assume that $\varphi(z)\in\OO^{\frac{q}{q-p}}(\widetilde{{\CC}\setminus\{z_0\}})$.

Then $\mathcal{L}_{\delta}$ with $\delta:=q\theta/p:=q\arg z_0/p$ is a separate Stokes line for $\hat{u}$,
such that $\delta_z=q\arg(z_0-z)/p$ for every sufficiently small $z$. 

For every sufficiently small $\varepsilon>0$ there exists $r>0$ such that for every fixed $z\in D_r$ the jump is given by 
\begin{multline*}
u^{\delta^+}(t,z) - u^{\delta^-}(t,z)=F_z(s)\biggl[\frac{1}{q\*\sqrt[q]{t^p}}C_{\frac{q}{p}}(s/\sqrt[q]{t^p})\biggr]\\
=\biggl[\varphi(z+s)+\dots+\varphi(z+e^{\frac{2(q-1)\pi\*i}{q}}\*s) \biggr]_{\theta_z}\biggl[\frac{1}{q\*\sqrt[q]{t^p}}C_{\frac{q}{p}}(s/\sqrt[q]{t^p})\biggr]\\
=\biggl[\varphi(z+s)\biggr]_{\theta_z}\biggl[\frac{1}{q\*\sqrt[q]{t^p}}C_{\frac{q}{p}}(s/\sqrt[q]{t^p})\biggr],
\end{multline*}
(the last equality arising from the fact that in this case all singular points appear in the function $\varphi(z+s)$), where the hyperfunction 
$F_z(s)=\bigg[\varphi(z+s)+\dots+\varphi(z+e^{\frac{2(q-1)\pi\*i}{q}}\*s)\bigg]_{\theta_z}$ belongs to the space 
$\OO^{\frac{q}{q-p}}\bigl(D\cup(S_{\theta}(\alpha)\setminus \mathcal{L}_{\theta_z})\bigr)\diagup
\OO^{\frac{q}{q-p}}\bigl(D\cup S_{\theta}(\alpha)\bigr)$
with $\theta_z=\arg(z_0-z)$.\\

Thus, we obtain analogous results as for the heat equation (\ref{eq:heat}), only that in Theroem \ref{tw:1} and Theorem \ref{tw:3} we replace $\frac{1}{\sqrt{4\pi t}}\,\,e^{-\frac{s^2}{4t}}$ by $\frac{1}{q\*\sqrt[q]{t^p}}C_{\frac{q}{p}}(s/\sqrt[q]{t^p})$.

\bigskip

\textbf{Acknowlegments}\\
The author would like to thank the anonymous referee for valuable comments and suggestions.

\bibliographystyle{siam}
\bibliography{summa}
\end{document}